\newcommand{\F}{\mathcal{F}}
\newcommand{\B}{\mathcal{B}}
\newcommand{\C}{\mathbb{C}} 
\newcommand{\EndProof}{\hspace{\stretch{1}} $\Box$}
  \newcommand\binomialCoefficient[2]{%
     \c@pgf@counta=#1
     \c@pgf@countb=#2
     %
     \c@pgf@countc=\c@pgf@counta%
     \advance\c@pgf@countc by-\c@pgf@countb%
     \ifnum\c@pgf@countb>\c@pgf@countc%
         \c@pgf@countb=\c@pgf@countc%
     \fi%
     %
     \c@pgf@countc=1
     \c@pgf@countd=0
     \pgfmathloop
         \ifnum\c@pgf@countd<\c@pgf@countb%
         \multiply\c@pgf@countc by\c@pgf@counta%
         \advance\c@pgf@counta by-1%
         \advance\c@pgf@countd by1%
         \divide\c@pgf@countc by\c@pgf@countd%
     \repeatpgfmathloop%
     \the\c@pgf@countc%
 }
\begin{document}

\begin{verbatim}\end{verbatim}\vspace{2.5cm}


\begin{frontmatter}

\title{Asymptotic Analysis of Central \\ Binomiacci Numbers}  

\author{Hebert P\'erez-Ros\'es\thanksref{myemail}
} 
\address{Department of Computer Science and Mathematics\\Rovira i Virgili University\\Tarragona, Spain\\
Conjoint fellow, University of Newcastle, Australia}

\thanks[myemail]{Email: \href{mailto:hebert.perez@matematica.udl.cat} 
{\texttt{\normalshape hebert.perez@urv.cat}}} 


\begin{abstract}
We make an asymptotic analysis via singularity analysis of generating functions of a number sequence that involves the Fibonacci numbers and generalizes the binomial coefficients. 
\end{abstract}

\begin{keyword}
Pascal table, Fibonacci numbers, generating functions\footnote{MSC 2020: 05A10, O5A15, 11B39, 11B65} 
\end{keyword}

\end{frontmatter}


\section{Introduction}
\label{sec:intro}

Let $\langle \F_j \rangle_{j \geq 0} = \langle 1, 1, 2, 3, 5, 8, 13, \ldots \rangle$ be the sequence of Fibonacci numbers. We consider the bivariate number sequence $\langle \B_{k,n} \rangle_{k,n \geq 0}$ defined by the following recurrence equation: 

\begin{equation}
\label{eq:recurrence1}
\B_{k,n} = 
\begin{cases} 
\F_n, \; \mbox{ if } k=0 \mbox{ and } n \geq 0, \\
\F_k, \; \mbox{ if } n=0 \mbox{ and } k \geq 0, \\
\B_{k,n-1} + \B_{k-1,n}, \mbox{ otherwise } 
\end{cases} 
\end{equation}

In other words, the sequence $\langle \B_{k,n} \rangle$ is a generalization of the binomial coefficients, and it can be constructed with the aid of a Pascal-like table, where the ones at the boundaries have been replaced by the elements of $\langle \F_j \rangle$. It corresponds to sequence \href{https://oeis.org/A074829}{\textbf{A074829}} in the Online Encyclopedia of Integer Sequences \cite{oeis}, with the difference that we have organized the numbers in table form, rather than triangle form, for our convenience. The first few elements of $\langle \B_{k,n} \rangle$ are depicted in Figure \ref{tab:binomiacci1}, up to $k=4$ and $n=8$. 

\begin{table}[htp]
\label{tab:binomiacci1} 
\begin{center}
\begin{tabular}{|cc|*{9}{c}|} \hline
\multicolumn{2}{|c|}{} & \multicolumn{9}{c|}{$n$} \\
\multicolumn{2}{|c|}{$k$} & 0 & 1 & 2 & 3 & 4 & 5 & 6 & 7 & 8 \\ 
\hline 
\multicolumn{2}{|c|}{0} & \textbf{1} & 1  & 2  & 3  & 5   & 8   & 13  & 21  & 44 \\ 
\multicolumn{2}{|c|}{1} & 1  & \textbf{2} & 4  & 7  & 12  & 20  & 33  & 54  & 98 \\ 
\multicolumn{2}{|c|}{2} & 2  & 4  & \textbf{8} & 15 & 27  & 47  & 80  & 134 & 232 \\ 
\multicolumn{2}{|c|}{3} & 3  & 7  & 15 & \textbf{30} & 57  & 104 & 184 & 318 & 550 \\ 
\multicolumn{2}{|c|}{4} & 5  & 12 & 27 & 57 & \textbf{114} & 218 & 402 & 720 & 1270 \\ 
\hline
\end{tabular}
\caption{The Binomiacci table}
\end{center}
\end{table}

\begin{table}[htp]
\label{tab:binomiacci2} 
\begin{center}
\begin{tabular}{>{}l<{\hspace{12pt}}*{13}{c}}
&&&&&&&\textbf{1}&&&&&&\\
&&&&&&1&&1&&&&&\\
&&&&&2&&\textbf{2}&&2&&&&\\
&&&&3&&4&&4&&3&&&\\
&&&5&&7&&\textbf{8}&&7&&5&&\\
&&8&&12&&15&&15&&12&&8&\\
&13&&20&&27&&\textbf{30}&&27&&20&&13
\end{tabular}
\caption{The Binomiacci triangle}
\end{center}
\end{table}

Since the numbers $\B_{k,n}$ are a combination of Fibonacci numbers and binomial coefficients, we may feel tempted to name them \lq Fibonomial coefficients\rq. Unfortunately, that name is already taken, and it corresponds to another number sequence that is well studied (see, for instance \cite{Ben09,Krot04,Tro07}). Nevertheless, the alternative portmanteau term \lq \emph{Binomiacci}\rq \ does not appear in the literature, and seems to be a good option for our sequence \href{https://oeis.org/A074829}{\textbf{A074829}}. 

Apparently, the idea of combining Fibonacci numbers with a Pascal-like triangle or table dates back to Hosoya \cite{Hoso76}. More recently other authors have been motivated by this idea \cite{Bel23} \cite{Son22}  and have defined general sequences of numbers that include our sequence of Binomiacci numbers \href{https://oeis.org/A074829}{\textbf{A074829}} as a special case, obtaining useful identities for them. 

Nevertheless, Binomiacci numbers still do not seem to be well-studied. In particular, the diagonal elements have not been investigated yet, as far as we know. In this paper we obtain generating functions for $\langle \B_{k,n} \rangle$, as well as for its diagonal, and with their aid, we obtain asymptotic estimates for the diagonal elements, the central Binomiacci numbers $\langle \B_{n,n} \rangle$. 

\section{Generating functions}
\label{sec:generatingfunctions}

Let us now state our main result, concerning the generating functions of Binomiacci numbers: 
\begin{lemma}
\label{lemma:genfuns} 
Let $A_k(z) = \sum_{n \geq 0} \B_{k,n} z^n$ be the univariate ordinary generating function associated with the sequence $\langle \B_{k,n} \rangle$, where $z \in \C$, and let $G(z,w) = \sum_{k \geq 0} \sum_{n \geq 0} \B_{k,n} z^n w^k$ be the corresponding bivariate generating function, where $z,w \in \C$. Then 
\begin{equation}
\label{eq:genfun1}
A_k(z) = 
    \begin{cases} 
        \frac{1}{1-z-z^2}, \; \mbox{ if } k=0 \mbox{ (the Fibonacci numbers), } \\
        \frac{1}{(1-z)(1-z-z^2)} \; \mbox{ if } k=1, \\
        \frac{1}{(1-z)(1-z-z^2)} + \sum_{i=0}^{k-2} \frac{F_i}{(1-z)^{k-1-i}}, \mbox{ for } k \geq 2, 
    \end{cases} 
\end{equation}
and 
\begin{equation}
\label{eq:genfun2}
    G(z,w) = \frac{1 - z - w + zw - z^2 w^2}{(1-z-z^2)(1-w-w^2)(1-z-w)} 
\end{equation}
\end{lemma}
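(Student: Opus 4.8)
The plan is to establish both generating function identities by exploiting the recurrence (\ref{eq:recurrence1}) directly. I would begin with the univariate functions $A_k(z)$, since the bivariate $G(z,w)$ can then be assembled from them. For $k=0$ the claim is just the classical Fibonacci generating function, which follows immediately from $\F_n = \F_{n-1} + \F_{n-2}$ together with the initial values. The key structural observation is that the recurrence $\B_{k,n} = \B_{k,n-1} + \B_{k-1,n}$ translates, upon multiplying by $z^n$ and summing over $n \geq 1$, into a relation of the form $A_k(z) = z\,A_k(z) + \bigl(A_{k-1}(z) - \text{(boundary correction)}\bigr)$, where the correction accounts for the special value $\B_{k,0} = \F_k$ replacing what a naive shift would predict. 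Solving for $A_k(z)$ yields a first-order recurrence in $k$ of the shape $A_k(z) = \tfrac{1}{1-z}A_{k-1}(z) + \tfrac{c_k}{1-z}$ for suitable constants $c_k$ coming from the Fibonacci boundary.

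Carrying out that $k$-recurrence carefully is the heart of the argument. First I would pin down the $k=1$ case by hand, checking that the boundary term $\B_{1,0}=\F_1=1$ produces exactly $\frac{1}{(1-z)(1-z-z^2)}$. Then, for $k \geq 2$, I would iterate the recurrence $A_k(z) = \frac{1}{1-z}A_{k-1}(z) + \frac{c_k}{1-z}$, which telescopes into a base term $\frac{1}{(1-z)^{k-1}(1-z-z^2)}$ plus a finite geometric-like sum $\sum_{i=0}^{k-2} \frac{\F_i}{(1-z)^{k-1-i}}$. The claimed closed form in (\ref{eq:genfun1}) has the base term written as $\frac{1}{(1-z)(1-z-z^2)}$ rather than $\frac{1}{(1-z)^{k-1}(1-z-z^2)}$, so I expect the main obstacle to be a bookkeeping reconciliation: one must verify by induction on $k$ that the stated expression satisfies the derived recurrence, which amounts to checking the algebraic identity obtained after multiplying through by $(1-z)$ and using the Fibonacci relation $\F_{k-1}=\F_{k-2}+\F_{k-3}$ to merge the extra $\frac{1}{(1-z)^{k-1}}$ term into the sum. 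I would therefore prefer to prove (\ref{eq:genfun1}) by induction: assume the formula for $A_{k-1}(z)$, substitute into the $k$-recurrence, and simplify, tracking the Fibonacci coefficients so that the summation index shifts correctly.

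For the bivariate identity (\ref{eq:genfun2}), the cleanest route is to form $G(z,w) = \sum_{k\geq 0} A_k(z)\, w^k$ and sum the closed forms from (\ref{eq:genfun1}) as a power series in $w$. The $k=0$ and $k=1$ terms contribute $\frac{1}{1-z-z^2}(1 + \tfrac{w}{1-z})$, and the tail $\sum_{k\geq 2} w^k\bigl(\frac{1}{(1-z)(1-z-z^2)} + \sum_{i=0}^{k-2}\frac{\F_i}{(1-z)^{k-1-i}}\bigr)$ splits into a geometric series in $w/(1-z)$ and a Cauchy product whose Fibonacci factor resums to $\frac{1}{1-w-w^2}$. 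I would interchange the order of summation in the double sum, recognizing $\sum_{i\geq 0}\F_i\, w^{i} = \frac{w}{1-w-w^2}$ (or the appropriate shifted variant), so that the accumulated denominator $(1-w-w^2)$ emerges. After collecting all pieces over the common denominator $(1-z-z^2)(1-w-w^2)(1-z-w)$, the remaining task is purely algebraic: verify that the numerator collapses to $1 - z - w + zw - z^2w^2$. As a robustness check — and as an alternative proof avoiding the summation entirely — I would note that $G(z,w)$ must satisfy the functional equation encoding the recurrence, namely $(1-z-w)\,G(z,w) = (\text{boundary generating function along } k=0 \text{ and } n=0)$, and confirm that the claimed rational function indeed satisfies it; this provides an independent verification of the numerator and is where any sign error would surface.
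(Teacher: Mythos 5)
Your approach coincides with the paper's: multiply the recurrence by $z^n$ and sum over $n$ to obtain $(1-z)A_{k+1}(z)=A_k(z)+\F_{k+1}-\F_k=A_k(z)+\F_{k-1}$, iterate this first-order recurrence in $k$, and then form $G(z,w)=\sum_{k\geq 0}A_k(z)w^k$, recognizing the inner sum as a convolution with the Fibonacci sequence so that the factor $1-w-w^2$ emerges. Your closing suggestion to verify the claimed rational function against the functional equation $(1-z-w)G(z,w)=(\mbox{boundary terms})$ is a sound independent check that the paper does not perform.

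The one point you must not leave as deferred bookkeeping is the mismatch you yourself noticed in the base term of (\ref{eq:genfun1}). Iterating $A_{k+1}(z)=\frac{A_k(z)}{1-z}+\frac{\F_{k-1}}{1-z}$ from $A_1(z)=\frac{1}{(1-z)(1-z-z^2)}$ gives $A_k(z)=\frac{1}{(1-z)^{k}(1-z-z^2)}+\sum_{i=0}^{k-2}\frac{\F_i}{(1-z)^{k-1-i}}$ (exponent $k$, not $k-1$ as in your sketch). This is \emph{genuinely different} from the printed first term $\frac{1}{(1-z)(1-z-z^2)}$, and no Fibonacci identity will merge the discrepancy into the sum: for $k=2$ the printed right-hand side is $\frac{1}{(1-z)(1-z-z^2)}+\frac{1}{1-z}=2+3z+5z^2+\cdots$, whereas $\B_{2,n}=2,4,8,15,\ldots$, which is matched instead by $\frac{1}{(1-z)^2(1-z-z^2)}+\frac{1}{1-z}$. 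So the induction you propose would (correctly) fail against the statement as printed; what is needed is not a reconciliation but a correction of (\ref{eq:genfun1}). The paper tacitly makes this correction when it writes the first piece of $G(z,w)$ as $\sum_k \frac{w^k}{(1-z)^k(1-z-z^2)}$, which is why (\ref{eq:genfun2}) --- and everything downstream of it --- is nevertheless correct, and why your derivation of the bivariate formula goes through once you carry the exponent $k$ rather than the printed exponent $1$.
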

\begin{proof} 
In order to obtain $A_k(z)$ we start by multiplying the recurrence $\B_{k+1,n+1} = \B_{k,n+11} + \B_{k+1,n}$ by $z^n$ and summing over $n \geq 0$ to get 
\begin{displaymath}
\label{eq:genfun1A} 
\begin{split}
    \frac{A_{k+1}(z)-\B_{k+1,0}}{z} &= \frac{A_{k}(z)-\B_{k,0}}{z} + A_{k+1}(z), \\ 
    A_{k+1}(z)-\B_{k+1,0} &= A_{k}(z)-\B_{k,0} + z A_{k+1}(z), \\ 
    (1-z)A_{k+1}(z) &= A_{k}(z)+\B_{k+1,0}-\B_{k,0}, \\
                &= A_{k}(z)+\F_{k+1}-\F_k \\
                &= A_{k}(z)+\F_{k-1}, \mbox{ whence }\\
    A_{k+1}(z) &= \frac{A_{k}(z)}{1-z} + \frac{\F_{k-1}}{1-z}, 
\end{split}
\end{displaymath}
from where it is now easy to derive (\ref{eq:genfun1}). 

Now let $G(z,w) = \sum_{k \geq 0} A_k(z) w^k$. We multiply (\ref{eq:genfun1}) by $w^k$ and sum over $k \geq 2$ to get
\begin{equation}
\label{eq:genfun2A} 
    G(z,w) = \sum_k \frac{w^k}{(1-z)^k(1-z-z^2)} + \sum_k \left( \sum_{i=0}^{k-2} \frac{F_i}{(1-z)^{k-1-i}} \right) w^k, 
\end{equation}
The first term of (\ref{eq:genfun2A}) is 
\begin{displaymath}
\begin{split}
    B(z,w) &= \frac{1}{1-z-z^2} \sum_k \frac{w^k}{(1-z)^k} \\
       &= \frac{1}{(1-z-z^2)(1-\frac{w}{1-z})} \\
       &= \frac{1-z}{(1-z-z^2)(1-z-w)}, 
\end{split}
\end{displaymath}
while the second term is 
\begin{displaymath}
    C(z,w) = \frac{w^2}{1-z} \sum_k \left( \sum_{i=0}^{k-2} \frac{F_i}{(1-z)^{k-2-i}} \right) w^{k-2}.  
\end{displaymath}
The inner sum of $C(z,w)$ is obviously the convolution of the sequences $\langle \F_0, \ldots, \F_{k-2} \rangle$ and $\langle 1, \frac{1}{1-z}, \ldots, \frac{1}{(1-z)^{k-2}} \rangle$, with respective generating functions $\frac{1}{1-z-z^2}$ and $\frac{1-z}{1-z-w}$, hence 
\begin{displaymath}
\begin{split}
C(z,w) &= \frac{w^2(1-z)}{(1-z)(1-z-z^2)(1-w-w^2)(1-z-w)} \\
       &= \frac{w^2}{(1-z-z^2)(1-w-w^2)(1-z-w)},  
\end{split}   
\end{displaymath}
and 
\begin{displaymath}
\begin{split}
    G(z,w) &= \frac{1-z}{(1-z-z^2)(1-z-w)} + \frac{w^2}{(1-z-z^2)(1-w-w^2)(1-z-w)} \\
            &= \frac{1 - z - w + zw - z^2 w^2}{(1-z-z^2)(1-w-w^2)(1-z-w)}.  
\end{split}   
\end{displaymath}
\end{proof}

As in the Pascal triangle, we can see that the largest elements of Table \ref{tab:binomiacci1} are located on the diagonal. These are our main focus of interest, and our main result deals with finding a generating function for the diagonal elements, also called \emph{central elements}, which can be done with the aid of our bivariate generating function obtained in Lemma \ref{eq:genfun1}. 

\begin{theorem}
\label{theo:diagonal-genfun1}
Let $\displaystyle C(s) = \sum_{n \geq 0} \B_{n,n} s^n$ denote the univariate ordinary generating function associated with the sequence of the central Binomiacci numbers $\langle \B_{n,n} \rangle_{n=0}^{\infty}$, where $s \in \C$. Then, 
\begin{equation}
\label{eq:diagonal-genfun1}
    C(s) = \frac{(s-1)(\sqrt{1-4s}-s)}{(s^2+4s-1)\sqrt{1-4s}}. 
\end{equation} 
\end{theorem}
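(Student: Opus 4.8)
The plan is to extract the diagonal of the bivariate generating function $G(z,w)$ obtained in Lemma~\ref{lemma:genfuns} by the classical \emph{diagonal method} (Cauchy coefficient extraction). Since $G(z,w) = \sum_{k,n}\B_{k,n}\, z^n w^k$, the coefficient of $z^n w^k$ is $\B_{k,n}$, so the central sequence $\B_{n,n}$ is isolated by selecting those terms in which the exponent of $z$ equals the exponent of $w$. The substitution $z = s/w$ converts these into the terms that are independent of $w$, because $G(s/w,w) = \sum_{k,n}\B_{k,n}\, s^n w^{k-n}$, whose constant term in $w$ is precisely $C(s)$. Hence
\[
    C(s) = [w^0]\, G\!\left(\tfrac{s}{w},\, w\right) = \frac{1}{2\pi i}\oint G\!\left(\tfrac{s}{w},\, w\right)\frac{dw}{w},
\]
where the contour is a small, positively oriented circle about the origin.

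First I would carry out the substitution $z = s/w$ in (\ref{eq:genfun2}) and clear the resulting powers of $w$, rewriting the integrand $\frac{1}{w}\,G(s/w,w)$ as a single rational function of $w$ with coefficients in $s$. Up to the powers of $w$ absorbed from the substitution, the three quadratic factors in the denominator become $w^2 - sw - s^2$, $1 - w - w^2$, and $w^2 - w + s$, and the numerator becomes a polynomial in $w$.

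Next I would locate the poles in $w$ and decide which lie inside the contour, namely those that tend to $0$ as $s \to 0$. The roots of $w^2 - sw - s^2$ are $w = s(1\pm\sqrt5)/2 = \bigoh(s)$, both inside; the roots of $w^2 - w + s$ are $\frac{1\pm\sqrt{1-4s}}{2}$, of which only $w_- = \frac{1-\sqrt{1-4s}}{2}\to 0$ lies inside, while $\frac{1+\sqrt{1-4s}}{2}\to 1$ stays outside; and the roots of $1-w-w^2$, being $\frac{-1\pm\sqrt5}{2}$, are bounded away from $0$ and hence outside. Thus for $|s|$ sufficiently small one can fix a contour radius (for instance $\tfrac12$) that separates the three interior poles from all the others, and $C(s)$ equals the sum of the residues at those three interior poles; the identity then extends by analytic continuation.

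Finally I would evaluate and add the three residues. The pair of residues at the roots of $w^2 - sw - s^2$ are conjugate under $\sqrt5 \mapsto -\sqrt5$, so their sum is rational in $s$ and should account for the rational part $\frac{s-1}{s^2+4s-1}$ of (\ref{eq:diagonal-genfun1}); the residue at $w_-$ introduces the radical $\sqrt{1-4s}$, since $(w^2-w+s)' = 2w-1$ equals $-\sqrt{1-4s}$ there, and should produce the remaining term $-\frac{s(s-1)}{(s^2+4s-1)\sqrt{1-4s}}$. Placing the three contributions over a common denominator and simplifying then yields $\frac{(s-1)(\sqrt{1-4s}-s)}{(s^2+4s-1)\sqrt{1-4s}}$. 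I expect the main obstacle to be exactly this last step: the residue algebra is lengthy, and the delicate part is to combine the symmetric pair of residues into closed rational form and to verify that, together with the $w_-$ residue, everything collapses to the stated compact expression with $s^2 + 4s - 1$ in the denominator. A useful safeguard is to expand the candidate answer as a power series and confirm that its initial coefficients reproduce the diagonal entries $1, 2, 8, 30, 114, \ldots$ of Table~\ref{tab:binomiacci1}.
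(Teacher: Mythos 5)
Your proposal follows essentially the same route as the paper: the diagonal method of Stanley, extracting the constant term of the substituted generating function by a contour integral and summing the residues at the three poles that tend to $0$ as $s \to 0$ (two from the Fibonacci-type quadratic factor, one from the branch of $\frac{1}{2}(1-\sqrt{1-4s})$). The only difference is that you substitute $z = s/w$ and work in $w$ while the paper substitutes $w = s/z$ and works in $z$, which is immaterial since $G(z,w)$ is symmetric; your predicted split of the answer into a rational part and a $\sqrt{1-4s}$ part matches the paper's computed residues.
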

\begin{proof} 
Stanley (\cite{Stan99}, Section 6.3) shows how to obtain a generating function for the diagonal elements of a  multivariate generating function. In fact, he gives two approaches: an algebraic approach and an analytic one, and he exemplifies both methods with the central binomial coefficients. Here we will follow the latter approach. The procedure is quite similar to the one described in \cite{Stan99}, though our calculations are much more involved. 

The crux of the argument is that the generating function of the diagonal of $G(z,w)$ is the constant term of $G(z,\frac{s}{z})$, regarded as a Laurent series in $z$, whose coefficients are power series in $s$. 

Hence, 
\begin{equation}
    \mbox{diag } G = [z^0] G(z,\frac{s}{z}) = \frac{1}{2\pi i} \int_{|z|=\rho} \frac{1}{z} G(z,\frac{s}{z}) dz. 
\end{equation}
Now, by the Residue Theorem, 
\begin{equation}
    \int_{|z|=\rho} \frac{1}{z} G(z,\frac{s}{z}) dz = \sum_{z=z(s)} Res_z \frac{1}{z} G(z,\frac{s}{z}),  
\end{equation}
where the sum ranges over all singularities of $\frac{1}{z} G(z,\frac{s}{z})$ inside the circle $|z|=\rho$, with radius $\rho > 0$, which are the singularities that satisfy $\displaystyle \lim_{s \rightarrow 0} z(s) = 0$. Thus, 
\begin{equation}
     \frac{1}{z} G(z,\frac{s}{z}) = \frac{z(z-s-z^2+zs-zs^2)}{(1-z-z^2)(z-s-z^2)(z^2-zs-s^2)}
\end{equation}
The factor $1-z-z^2$ in the denominator yields the singularities $z = \frac{1}{2}(\pm \sqrt{5}-1)$, but these are fixed, i.e. independent from $s$. Next, the factor $z-s-z^2$ yields the singularities $z = \frac{1}{2}(1 \pm \sqrt{1-4s})$. Of these, only $z_1 = \frac{1}{2}(1 - \sqrt{1-4s})$ tends to zero as $s \rightarrow 0$. Finally, the factor $z^2-zs-s^2$ yields the singularities $z_{2,3} = \frac{1}{2}(1 \pm \sqrt{5})s$, which both tend to zero as $s \rightarrow 0$. 

It turns out that all our singularities are simple poles. Recall that if $z_0$ is a simple pole of the function $F(z) = \frac{P(z)}{Q(z)}$, then 
\begin{equation}
    Res(F, z_0) = \frac{P(z_0)}{Q'(z_0)}.   
\end{equation}
In the case of our first pole, $z_1 = \frac{1}{2}(1 - \sqrt{1-4s})$, the function $F(z) = \frac{1}{z} G(z,\frac{s}{z})$ is of the form $\frac{P(z)}{Q(z)}$, with $\displaystyle P(z) = \frac{z(z-s-z^2+zs-zs^2)}{(1-z-z^2)(z^2-zs-s^2)}$ and $Q(z) = z-s-z^2$. Hence, $Q'(z) = 1-2z$, and 
\begin{align*}
    Res(F, z_1) &= \frac{P(z_1)}{Q'(z_1)} = \frac{z_1(z_1-s-z_1^2+z_1s-z_1s^2)}{(1-z_1-z_1^2)(1-2z_1)(z_1^2-z_1s-s^2)} \\
                &= \frac{s(1-s)}{\sqrt{1-4s}\left( s^2+4s-1 \right)}. 
\end{align*}
Similarly, in our second pole, $z_{2} = \frac{1}{2}(1 +\sqrt{5})s$, we also have a function of the form $\frac{P(z)}{Q(z)}$, but this time $\displaystyle P(z) = \frac{z(z-s-z^2+zs-zs^2)}{(1-z-z^2)(z-s-z^2)}$ and $Q(z) = z^2-sz-s^2$, so $Q'(z) = 2z-s$. Thus, 
\begin{align*}
    Res(F, z_2) &= \frac{P(z_2)}{Q'(z_2)} = \frac{z_2(z_2-s-z_2^2+z_2s-z_2s^2)}{(1-z_2-z_2^2)(z_2-s-z_2^2)(2z_2-s)} \\
                &= \frac{-2}{\sqrt{5}\left[ (3+\sqrt{5})s-\sqrt{5}+1 \right]}. 
\end{align*}
Finally, in our third pole $z_{3} = \frac{1}{2}(1 -\sqrt{5})s$ we also have $\displaystyle P(z) = \frac{z(z-s-z^2+zs-zs^2)}{(1-z-z^2)(z-s-z^2)}$, $Q(z) = z^2-sz-s^2$, and $Q'(z) = 2z-s$. So, 
\begin{align*}
    Res(F, z_3) &= \frac{P(z_3)}{Q'(z_3)} = \frac{z_3(z_3-s-z_3^2+z_3s-z_3s^2)}{(1-z_3-z_3^2)(z_3-s-z_3^2)(2z_3-s)} \\
                &= \frac{2}{(3\sqrt{5}-5)s+\sqrt{5}+5 }. 
\end{align*}
After adding up the three residues and simplifying the expression, we get 
\begin{displaymath}
    Res(F, z_1)+Res(F, z_2)+Res(F, z_3) = \frac{(s-1)(\sqrt{1-4s}-s)}{(s^2+4s-1)\sqrt{1-4s}}, 
\end{displaymath}
as desired. 

\end{proof}

\section{Asymptotic Analysis}
\label{sec:asymptotic}

Now we can extract asymptotic information for the central Binomiacci numbers $\B_{n,n}$ from their generating function $C(s)$, obtained in Theorem \ref{theo:diagonal-genfun1}. Again, this involves investigating the singularities of $C(s)$. 

We readily recognize that $C(s)$ has singularities at $s = -2 \pm \sqrt{5}$, and $s = \frac{1}{4}$. It turns out that all the singularities are real, and the one with smallest modulus is $\sqrt{5}-2 \approx 0.23607$. However, this is a removable  singularity and therefore does not affect the growth of $\B_{n,n}$. 

The removable singularity is followed very closely by a branch point at $s = \frac{1}{4}$. Branch points and poles are algebraic singularities. Given a univariate function of a complex variable $f(s)$, recall that $\alpha$ is an \emph{algebraic singularity} of $f$ if $f$ can be written near $\alpha$ as 

\begin{equation}
\label{eq:singularity}
f(s) = f_0(s) + \frac{g(s)}{(1-s/\alpha)^\omega}
\end{equation}

where $f_0$ and $g$ are analytic near $\alpha$, $g$ is nonzero near $\alpha$, and $\omega$ is a real number different from $0, -1, -2, \ldots$. 

Indeed, $C(s)$ can be written in the above form, since 

\begin{displaymath}
    C(s) = \frac{s-1}{s^2+4s-1} + \frac{s-s^2}{(s^2+4s-1) \sqrt{1-4s}},  
\end{displaymath} 

where $\displaystyle f_0(s) = \frac{s-1}{s^2+4s-1}$ and $\displaystyle g(s) = \frac{s-s^2}{(s^2+4s-1)}$ are analytic near $\alpha = \frac{1}{4}$, and $\omega = \frac{1}{2}$. 

Now we can easily obtain a coarse estimate of the growth of $\B_{n,n}$ by applying Theorem 3 of \cite{Lue80}, that we reproduce here: 

\begin{theorem}
\label{th:asymptotic1}
Suppose that for some real $\rho > 0$, $f(s)$ is analytic in the region $|s| < \rho$, and has a finite number $\tau >0$ of singularities on the circle $|s| = \rho$, all of which are algebraic. Let $\alpha_i, \ \omega_i$, and $g_i$ be the values of $\alpha, \ \omega$, and $g$ in (\ref{eq:singularity}), corresponding to the $i$-th such singularity. Then $f(s)$ is the generating function for a sequence $\langle a_n \rangle$ satisfying
\begin{equation}
    \label{eq:asymptotic1}
    a_n = \frac{1}{n} \sum_{i=1}^{\tau} \frac{g_i(\alpha_i)n^{\omega_i}}{\Gamma(\omega_i)\alpha_i^n} + o(\rho^{-n}n^{\Omega-1})
\end{equation}
where $\Omega$ is the maximum of the $\omega_i$ and $\Gamma$ denotes the Gamma function.
\end{theorem}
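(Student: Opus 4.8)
The plan is to recognise this as a transfer (singularity-analysis) theorem and to pass from $f$ to its coefficients via Cauchy's formula $a_n = \frac{1}{2\pi i}\oint f(s)\,s^{-n-1}\,ds$, localising the integral at the $\tau$ singularities on $|s|=\rho$. The engine of the whole argument is the coefficient asymptotics of the single model function $(1-s/\alpha)^{-\omega}$. First I would compute these exactly from the generalised binomial theorem, $[s^n](1-s/\alpha)^{-\omega} = \alpha^{-n}\binom{n+\omega-1}{n} = \alpha^{-n}\,\frac{\Gamma(n+\omega)}{\Gamma(\omega)\,\Gamma(n+1)}$, and then apply Stirling's formula in the form $\Gamma(n+\omega)/\Gamma(n+1) = n^{\omega-1}\bigl(1+O(1/n)\bigr)$ to obtain $[s^n](1-s/\alpha)^{-\omega} \sim \frac{n^{\omega-1}}{\Gamma(\omega)}\,\alpha^{-n}$. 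The hypothesis $\omega\notin\{0,-1,-2,\dots\}$ is precisely what keeps $\Gamma(\omega)$ finite and nonzero, so this leading term is genuine.

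Next I would reinstate the analytic multiplier $g$ and sum over the singularities. Taylor-expanding $g_i$ about $\alpha_i$ and writing the expansion in powers of $(1-s/\alpha_i)$ gives $g_i(s)(1-s/\alpha_i)^{-\omega_i} = g_i(\alpha_i)(1-s/\alpha_i)^{-\omega_i} + \sum_{j\ge 1} c_j\,(1-s/\alpha_i)^{\,j-\omega_i}$. The constant term reproduces exactly $\frac{g_i(\alpha_i)\,n^{\omega_i-1}}{\Gamma(\omega_i)}\,\alpha_i^{-n} = \frac{1}{n}\cdot\frac{g_i(\alpha_i)\,n^{\omega_i}}{\Gamma(\omega_i)\,\alpha_i^{n}}$, which is the $i$-th term of (\ref{eq:asymptotic1}), while each higher term contributes $O(n^{\omega_i-1-j}\alpha_i^{-n})$ and is of strictly smaller order. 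Since $|\alpha_i|=\rho$ for every $i$, adding the $\tau$ leading contributions yields the displayed sum, and the largest surviving error exponent is governed by $\Omega=\max_i\omega_i$, matching the remainder $o(\rho^{-n}n^{\Omega-1})$.

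The hard part will be making the localisation rigorous and the error bound uniform, rather than the coefficient bookkeeping above; the clean route is Darboux's method combined with a transfer theorem of Flajolet--Odlyzko type. I would subtract only the leading singular monomials, setting $R(s) = f(s) - \sum_{i=1}^{\tau} g_i(\alpha_i)(1-s/\alpha_i)^{-\omega_i}$, and observe that near each $\alpha_i$ the subtraction lowers the singular exponent by one, since $\bigl(g_i(s)-g_i(\alpha_i)\bigr)(1-s/\alpha_i)^{-\omega_i} = O\bigl((1-s/\alpha_i)^{1-\omega_i}\bigr)$. Two facts then have to be secured, and this is the genuine obstacle: (i) that $f$, hence $R$, continues analytically to a domain obtained from a disk of radius slightly larger than $\rho$ by excising a small wedge at each $\alpha_i$ (a $\Delta$-domain), with the branch cuts of the subtracted monomials routed outward, which is nontrivial because the $g_i$ are only assumed analytic near $\alpha_i$; and (ii) that the local $O$-estimates hold uniformly on that domain. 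Granting (i) and (ii), deforming the Cauchy contour onto Hankel-type loops around the $\alpha_i$ together with an outer arc at radius $\rho(1+\eta)$ and invoking the transfer theorem converts the bound $O\bigl((1-s/\alpha_i)^{1-\omega_i}\bigr)$ into $[s^n]R = O(n^{\Omega-2}\rho^{-n}) = o(\rho^{-n}n^{\Omega-1})$. The contributions of the $\tau$ subtracted monomials are exactly the main terms computed above, and summing them delivers (\ref{eq:asymptotic1}).
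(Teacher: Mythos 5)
The paper does not prove this statement at all: it is quoted verbatim as Theorem 3 of \cite{Lue80} and used as a black box, so there is no internal proof to compare yours against. Judged on its own terms, your proposal is a correct outline of the standard argument (Darboux's method, in its modern transfer-theorem dress). The computational core is right: $[s^n](1-s/\alpha)^{-\omega}=\alpha^{-n}\Gamma(n+\omega)/\bigl(\Gamma(\omega)\Gamma(n+1)\bigr)\sim n^{\omega-1}\alpha^{-n}/\Gamma(\omega)$, the role of the hypothesis $\omega\notin\{0,-1,-2,\dots\}$ is correctly identified, and subtracting the leading singular monomial at each $\alpha_i$ lowers the local exponent by one, so a transfer estimate on the remainder $R$ gives $[s^n]R=O(n^{\Omega-2}\rho^{-n})=o(n^{\Omega-1}\rho^{-n})$, as needed.

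Two remarks on the part you flag as the genuine obstacle. First, the continuation to a $\Delta$-domain that you list as point (i) is not really in doubt: it follows from the hypotheses by compactness. At each of the finitely many $\alpha_i$, the representation (\ref{eq:singularity}) with $f_0$ and $g$ analytic in a full neighbourhood of $\alpha_i$ already continues $f$ across the circle there (with an outward cut for the branch of $(1-s/\alpha_i)^{-\omega_i}$); at every other point of $|s|=\rho$ the function is by assumption non-singular, hence analytic in a small disk; finitely many such disks cover the circle, so $f$ extends to $|s|<\rho+\eta$ minus $\tau$ outward cuts. So (i) should be stated and proved as a short lemma rather than ``granted.'' Second, if you want to avoid the Flajolet--Odlyzko machinery (which postdates Lueker), the classical Darboux route closes the argument on the circle itself: subtract enough terms $c_{ij}(1-s/\alpha_i)^{j-\omega_i}$ of each local expansion that the remainder is $m$ times continuously differentiable on the closed disk, apply the Riemann--Lebesgue lemma to conclude its coefficients are $o(n^{-m}\rho^{-n})$, and check that the subtracted correction terms contribute $O(n^{\omega_i-1-j}\rho^{-n})$, which is absorbed by the stated error. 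Either way the theorem follows; your sketch has the right shape, but (i) needs to be written out and the uniformity in (ii) made explicit before it counts as a complete proof.
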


In our case there is only one algebraic singularity on the circle $|s| = \frac{1}{4}$, hence $\tau=1$, $\rho = \alpha_1 = \frac{1}{4}$, $\Omega = \omega_1 = \frac{1}{2}$ and $g(\frac{1}{4}) = 3$. By plugging all the parameters into Equation \ref{eq:asymptotic1} we get 

\begin{corollary}
    \label{coro:asymptotics1}
    \begin{displaymath}
        \B_{n,n} = \frac{3 \cdot 4^n}{\sqrt{\pi n}} + o\left( \frac{4^n}{\sqrt{n}} \right) 
    \end{displaymath}
\EndProof     
\end{corollary}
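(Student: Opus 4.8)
The plan is to invoke the transfer theorem (Theorem~\ref{th:asymptotic1}) directly, so the real work is to verify its hypotheses for $C(s)$ with $\rho=\frac14$ and then read off and substitute the parameters. First I would catalogue the singularities: the quadratic factor $s^2+4s-1$ vanishes at $s=-2\pm\sqrt5$, while the radical $\sqrt{1-4s}$ contributes a branch point at $s=\frac14$. Of the three candidates, $-2-\sqrt5$ has modulus exceeding $\frac14$, and $\sqrt5-2\approx0.236$ lies strictly inside the disk $|s|<\frac14$, so it is this interior point that must be dealt with before the theorem applies.

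The crucial step is therefore to show that the apparent singularity at $s_0=\sqrt5-2$ is removable, so that $C$ is genuinely analytic on $|s|<\frac14$. Here I would exploit the identity $1-4s_0=s_0^2$ satisfied by any root of $s^2+4s-1$: on the principal branch (which is positive along $[0,\tfrac14)$) this forces $\sqrt{1-4s_0}=s_0$, so the numerator factor $\sqrt{1-4s}-s$ of $C$ also vanishes at $s_0$. A first-order zero count then settles the matter—the factor $s^2+4s-1$ has a simple zero at $s_0$, and $\sqrt{1-4s}-s$ likewise has a simple zero there (its derivative $-2/\sqrt{1-4s}-1$ is nonzero at $s_0$), while $(s-1)$ and $\sqrt{1-4s}$ stay nonvanishing—so $C$ is a ratio of two functions each vanishing to first order, and the pole cancels. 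This leaves $s=\frac14$ as the unique singularity on $|s|=\frac14$, giving $\tau=1$.

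With analyticity in $|s|<\frac14$ secured, the remainder is substitution. From the algebraic-singularity form $C(s)=f_0(s)+g(s)(1-4s)^{-1/2}$ recorded above I would read off $\alpha=\frac14$, $\omega=\frac12$, and evaluate $g(\tfrac14)=3$ (numerator $\tfrac14-\tfrac1{16}=\tfrac3{16}$ over denominator $\tfrac1{16}$). Feeding $\tau=1$, $\Omega=\omega_1=\frac12$, $\Gamma(\tfrac12)=\sqrt\pi$, and $\alpha_1^{-n}=4^n$ into (\ref{eq:asymptotic1}), the leading term collapses to $\frac1n\cdot\frac{3\,n^{1/2}}{\sqrt\pi}\,4^n=\frac{3\cdot4^n}{\sqrt{\pi n}}$, with error $o(4^n n^{-1/2})$, exactly as claimed.

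I expect the removability verification at $\sqrt5-2$ to be the only genuinely nontrivial point; everything after it is a mechanical plug-in. Conceptually this is also the delicate part, since it is precisely what allows the branch point at $\frac14$ to govern the growth even though it is \emph{not} the singularity of smallest modulus—a mild departure from the usual ``nearest singularity dominates'' heuristic that one must justify rather than silently assume.
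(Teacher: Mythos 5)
Your proposal follows essentially the same route as the paper: identify the singularities of $C(s)$, dismiss $s=\sqrt5-2$ as removable, write $C$ in the algebraic-singularity form with $f_0(s)=\frac{s-1}{s^2+4s-1}$, $g(s)=\frac{s-s^2}{s^2+4s-1}$, $\alpha=\frac14$, $\omega=\frac12$, and plug $\tau=1$, $g(\tfrac14)=3$, $\Gamma(\tfrac12)=\sqrt\pi$ into Theorem~\ref{th:asymptotic1}. The only difference is that you actually justify the removability at $\sqrt5-2$ via the identity $1-4s_0=s_0^2$ and a zero count, whereas the paper merely asserts it; this is a welcome elaboration, not a different argument.
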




In Table \ref{tab:asymptotics1} we can compare the values of the asymptotic approximation $\displaystyle \frac{3 \cdot 4^n}{\sqrt{\pi n}}$ given by Corollary \ref{coro:asymptotics1} with the actual values of $\B_{n,n}$ for $n= 1, \ldots, 15$. As we can see in the table, both quantities differ significantly for the first values of $n$, but as $n$ grows the ratio between them decreases slowly towards one. 

\begin{table}[h!]
\centering
\begin{tabular}{||c c c c||} 
 \hline
 $n$ & $\B_{n,n}$ & $\displaystyle \frac{3 \cdot 4^n}{\sqrt{\pi n}}$ & ratio \\ [0.8ex] 
 \hline\hline
 1 & 2 & 6.77 & 3.385 \\ 
 2 & 8 & 19.15 & 2.39 \\
 3 & 30 & 62.54 & 2.08 \\
 4 & 114 & 216.65 & 1.9 \\
 5 & 436 & 775.1 & 1.77 \\ 
 6 & 1676 & 2830.3 & 1.688 \\
 7 & 6468 & 10481.4 & 1.62 \\
 8 & 25040 & 39217.6 & 1.56 \\ 
 9 & 97190 & 147899 & 1.52 \\
 10 & 378050 & 561237 & 1.48 \\
 11 & 1473254 & 2140470 & 1.45 \\
 12 & 5750390 & 8197390 & 1.425 \\
 13 & 22476090 & 31503200 & 1.4 \\
 14 & 87958306 & 121429000 & 1.38 \\
 15 & 344593314 & 469246000 & 1.36 \\
 [1ex] 
 \hline
\end{tabular}
\caption{Coarse asymptotic approximation of $\B_{n,n}$}
\label{tab:asymptotics1} 
\end{table}

\section*{Acknowledgements}
\label{sec:disclaimer}

The author was partially supported by Grant 2021 SGR 00115 from the Government of Catalonia, by Project ACITECH PID2021-124928NB-I00, funded by MCIN/AEI/ 10.13039/501100011033/FEDER, EU, and by Project HERMES, funded by INCIBE and by the European Union NextGeneration EU/PRTR.  


\end{document}


({\texttt{http://www.elsevier.com/locate/endm}})

To include a PostScript image in a \texttt{.pdf} file produced with 
pdf\LaTeX, you first have to convert the image to a \texttt{.pdf} file.
The conversion can be accomplished most easily using Ghostscript; you
can simply view the file in Ghostview and then print the image to a
\texttt{.pdf} file using the \verb+pdfwriter+ option within Ghostview.
The result for a standard chess board that is part of the Ghostview
distribution is the following image:\\

\begin{center}
\includegraphics[height=2.8in,width=2.8in]{chess.eps}
\end{center}

Below is a copy of a color image. While pdf\LaTeX\ can handle image
files in other formats, \LaTeX\ can only handle \texttt{.eps} images
reliably.\\

\begin{figure}[htbp]
\begin{center}
     \includegraphics[height=3.5cm]{Delta3-Diam4C.eps}
     \caption{Construction for $\Delta = 3$ and $D = 4$}
\end{center}
\label{fig:Delta3Diam4}
\end{figure}

\begin{figure}[htbp]
\begin{center}
     \includegraphics[height=6.5cm]{Delta4-Diam5C.eps}
     \caption{Construction for $\Delta = 4$ and $D = 5$}
\end{center}
\label{fig:Delta4Diam5}
\end{figure}

\begin{figure}[ht]
\centering
\subfigure[$\langle C_{\langle F_j \rangle}(n,k) \rangle$]{
$$\begin{tabular}{ccccccccc}
&&&&1&&&&\\
&&&1&&1&&&\\
&&2&&2&&2&&\\
&3&&4&&4&&3&\\
5&&7&&8&&7&&5\\
 & \vdots & & \vdots & & \vdots & & \vdots & 
\end{tabular}$$
\label{fig:subfig1}
}
\subfigure[$\langle D_{\langle F_j \rangle}(n,k) \rangle$]{
$$\begin{tabular}{ccccccccc}
&&&&1&&&&\\
&&&1&&1&&&\\
&&2&&3&&2&&\\
&3&&6&&6&&3&\\
5&&11&&15&&11&&5 \\
 & \vdots & & \vdots & & \vdots & & \vdots & 
\end{tabular}$$
\label{fig:subfig2}
}
\label{fig:fibonomial1}
\caption{The sequences $\langle C_{\langle F_n \rangle}(n,k) \rangle$ and $\langle D_{\langle F_n \rangle}(n,k) \rangle$}
\end{figure}


\begin{tikzpicture}
  \foreach \n in {0,...,6} {
     \foreach \k in {0,...,\n} {
       \node at (\k-\n/2,-\n) {$\binomialCoefficient{\n}{\k}$};
    }
  }
\end{tikzpicture}

\begin{table}[htp]
\label{tab:binomiacci2} 
\begin{center}
\begin{tabular}{>{$n=}l<{$\hspace{12pt}}*{13}{c}}
0 &&&&&&&1&&&&&&\\
1 &&&&&&1&&1&&&&&\\
2 &&&&&1&&2&&1&&&&\\
3 &&&&1&&3&&3&&1&&&\\
4 &&&1&&4&&6&&4&&1&&\\
5 &&1&&5&&10&&10&&5&&1&\\
6 &1&&6&&15&&20&&15&&6&&1
\end{tabular}
\caption{The Binomiacci triangle}
\end{center}
\end{table}

\begin{table}[htp]
\label{tab:binomiacci2}
\begin{center}
\begin{tabular}{>{$}l<{$}|*{7}{c}}
\multicolumn{1}{l}{$k$} &&&&&&&\\\cline{1-1} 
0 &1&&&&&&\\
1 &1&1&&&&&\\
2 &1&2&1&&&&\\
3 &1&3&3&1&&&\\
4 &1&4&6&4&1&&\\
5 &1&5&10&10&5&1&\\
6 &1&6&15&20&15&6&1\\\hline
\multicolumn{1}{l}{} &0&1&2&3&4&5&6\\\cline{2-8}
\multicolumn{1}{l}{} &\multicolumn{7}{c}{$i$}
\end{tabular}
\caption{The Binomiacci triangle}
\end{center}
\end{table}

\begin{table}[htp]
\label{tab:diagonal1} 
\begin{center}
\begin{tabular}{|cc|*{9}{c}|} \hline
\multicolumn{2}{|c|}{} & \multicolumn{9}{c|}{$n$} \\
\multicolumn{2}{|c|}{$k$} & 0 & 1 & 2 & 3 & 4 & 5 & 6 & 7 & 8 \\ 
\hline 
\multicolumn{2}{|c|}{0} & \textbf{1} & 1  & 2  & 3  & 5   & 8   & 13  & 21  & 44 \\ 
\multicolumn{2}{|c|}{1} & 1  & \textbf{2} & 4  & 7  & 12  & 20  & 33  & 54  & 98 \\ 
\multicolumn{2}{|c|}{2} & 2  & 4  & \textbf{8} & 15 & 27  & 47  & 80  & 134 & 232 \\ 
\multicolumn{2}{|c|}{3} & 3  & 7  & 15 & \textbf{30} & 57  & 104 & 184 & 318 & 550 \\ 
\multicolumn{2}{|c|}{4} & 5  & 12 & 27 & 57 & \textbf{114} & 218 & 402 & 720 & 1270 \\ 
\hline
\end{tabular}
\caption{The central Binomiacci numbers}
\end{center}
\end{table}

\begin{center}
\begin{longtable}{|l|l|l|}
\caption{A sample long table.} \label{tab:long} \\

\hline \multicolumn{1}{|c|}{\textbf{First column}} & \multicolumn{1}{c|}{\textbf{Second column}} & \multicolumn{1}{c|}{\textbf{Third column}} \\ \hline 
\endfirsthead

\multicolumn{3}{c}%
{{\bfseries \tablename\ \thetable{} -- continued from previous page}} \\
\hline \multicolumn{1}{|c|}{\textbf{First column}} & \multicolumn{1}{c|}{\textbf{Second column}} & \multicolumn{1}{c|}{\textbf{Third column}} \\ \hline 
\endhead

\hline \multicolumn{3}{|r|}{{Continued on next page}} \\ \hline
\endfoot

\hline \hline
\endlastfoot

One & abcdef ghjijklmn & 123.456778 \\
One & abcdef ghjijklmn & 123.456778 \\
One & abcdef ghjijklmn & 123.456778 \\
One & abcdef ghjijklmn & 123.456778 \\
One & abcdef ghjijklmn & 123.456778 \\
One & abcdef ghjijklmn & 123.456778 \\
One & abcdef ghjijklmn & 123.456778 \\
One & abcdef ghjijklmn & 123.456778 \\
One & abcdef ghjijklmn & 123.456778 \\
One & abcdef ghjijklmn & 123.456778 \\
One & abcdef ghjijklmn & 123.456778 \\
One & abcdef ghjijklmn & 123.456778 \\
One & abcdef ghjijklmn & 123.456778 \\
One & abcdef ghjijklmn & 123.456778 \\
One & abcdef ghjijklmn & 123.456778 \\
One & abcdef ghjijklmn & 123.456778 \\
One & abcdef ghjijklmn & 123.456778 \\
One & abcdef ghjijklmn & 123.456778 \\
One & abcdef ghjijklmn & 123.456778 \\
One & abcdef ghjijklmn & 123.456778 \\
One & abcdef ghjijklmn & 123.456778 \\
One & abcdef ghjijklmn & 123.456778 \\
One & abcdef ghjijklmn & 123.456778 \\
One & abcdef ghjijklmn & 123.456778 \\
One & abcdef ghjijklmn & 123.456778 \\
One & abcdef ghjijklmn & 123.456778 \\
One & abcdef ghjijklmn & 123.456778 \\
One & abcdef ghjijklmn & 123.456778 \\
One & abcdef ghjijklmn & 123.456778 \\
One & abcdef ghjijklmn & 123.456778 \\
One & abcdef ghjijklmn & 123.456778 \\
One & abcdef ghjijklmn & 123.456778 \\
One & abcdef ghjijklmn & 123.456778 \\
One & abcdef ghjijklmn & 123.456778 \\
One & abcdef ghjijklmn & 123.456778 \\
One & abcdef ghjijklmn & 123.456778 \\
One & abcdef ghjijklmn & 123.456778 \\
One & abcdef ghjijklmn & 123.456778 \\
One & abcdef ghjijklmn & 123.456778 \\
One & abcdef ghjijklmn & 123.456778 \\
One & abcdef ghjijklmn & 123.456778 \\
One & abcdef ghjijklmn & 123.456778 \\
One & abcdef ghjijklmn & 123.456778 \\
One & abcdef ghjijklmn & 123.456778 \\
One & abcdef ghjijklmn & 123.456778 \\
One & abcdef ghjijklmn & 123.456778 \\
One & abcdef ghjijklmn & 123.456778 \\
One & abcdef ghjijklmn & 123.456778 \\
One & abcdef ghjijklmn & 123.456778 \\
One & abcdef ghjijklmn & 123.456778 \\
One & abcdef ghjijklmn & 123.456778 \\
One & abcdef ghjijklmn & 123.456778 \\
One & abcdef ghjijklmn & 123.456778 \\
One & abcdef ghjijklmn & 123.456778 \\
One & abcdef ghjijklmn & 123.456778 \\
One & abcdef ghjijklmn & 123.456778 \\
One & abcdef ghjijklmn & 123.456778 \\
One & abcdef ghjijklmn & 123.456778 \\
One & abcdef ghjijklmn & 123.456778 \\
One & abcdef ghjijklmn & 123.456778 \\
One & abcdef ghjijklmn & 123.456778 \\
One & abcdef ghjijklmn & 123.456778 \\
One & abcdef ghjijklmn & 123.456778 \\
One & abcdef ghjijklmn & 123.456778 \\
One & abcdef ghjijklmn & 123.456778 \\
One & abcdef ghjijklmn & 123.456778 \\
One & abcdef ghjijklmn & 123.456778 \\
One & abcdef ghjijklmn & 123.456778 \\
One & abcdef ghjijklmn & 123.456778 \\
One & abcdef ghjijklmn & 123.456778 \\
One & abcdef ghjijklmn & 123.456778 \\
One & abcdef ghjijklmn & 123.456778 \\
One & abcdef ghjijklmn & 123.456778 \\
One & abcdef ghjijklmn & 123.456778 \\
One & abcdef ghjijklmn & 123.456778 \\
One & abcdef ghjijklmn & 123.456778 \\
One & abcdef ghjijklmn & 123.456778 \\
One & abcdef ghjijklmn & 123.456778 \\
One & abcdef ghjijklmn & 123.456778 \\
One & abcdef ghjijklmn & 123.456778 \\
\end{longtable}
\end{center}